\documentclass{amsart}

\newtheorem{theorem}{Theorem}[section]
\newtheorem*{theorem*}{Theorem}

\newtheorem{proposition}[theorem]{Proposition}
\newtheorem{conjecture}[theorem]{Conjecture}

\theoremstyle{definition}
\newtheorem{definition}[theorem]{Definition}
\newtheorem*{defin*}{Definition}
\newtheorem*{example*}{Example}
\newtheorem{example}[theorem]{Example}

\theoremstyle{remark}
\newtheorem*{remark*}{Remark}
\newtheorem{remark}[theorem]{Remark}

\numberwithin{equation}{section}

\begin{document}
\title[Symplectic analog of Calabi's conj. for Calabi--Yau
$3$-folds] {Symplectic analog of Calabi's conjecture for Calabi--Yau
threefolds}
\author{Dmitry V. Egorov}
\address{
Ammosov Northeastern federal university\\ Kulakovskogo str. 48, 677000, Yakutsk, Russia}%
\email{egorov.dima@gmail.com}%
\thanks{This work is supported in part by the Council of the Russian
Federation President Grants (projects NSh-544.2012.1 and
MK-842.2011.1).}

\subjclass[2010]{Primary 32Q25, 35J96}

\keywords{K\"{a}hler potential, Calabi--Yau manifold}

\date{}


\begin{abstract}

In this paper we state an analog of  Calabi's conjecture proved by
Yau. The difference with the classical case is that  we propose
deformation of the complex structure, whereas the complex
Monge--Amp\`{e}re equation describes deformation of the K\"{a}hler
(symplectic) structure.
\end{abstract}

\maketitle

\section{Conjecture}

Celebrated  Calabi's  conjecture \cite{Calabi} proved by Yau
\cite{Yau} gives a criterion for a K\"{a}hler manifold  to have a
vanishing Ricci curvature. It states that if the first Chern class
of a manifold vanishes, then in a given K\"{a}hler class there
exists a unique Ricci-flat metric. Yau proved conjecture by showing
an existence of a solution for the complex Monge--Amp\`{e}re
equation.

In this paper we state an analog of Calabi's conjecture for
threefolds. We consider the deformation of a holomorphic volume
form, whereas  the K\"{a}hler form is not deformed. We clarify our
result of \cite{egorov3}, where the main equation is non-scalar.

\begin{conjecture}\label{conj:1}
Let $M$ be a compact K\"{a}hler manifold of complex dimension $n=3$
with $c_1(M)=0$. By $\omega$ and $\Omega=\rho+\sqrt{-1}\sigma$
denote K\"{a}hler form and holomorphic volume form on $M$
respectively. Consider equation
\begin{equation}\label{eq:main}
(\Omega - \sqrt{-1}dd^s\varphi\Omega)\wedge (\bar{\Omega} +
\sqrt{-1}dd^s\varphi\bar{\Omega}) =
e^F\Omega\wedge\bar{\Omega}\tag{$\ast$},
\end{equation}
or equivalently
$$
(\rho+dd^s\varphi\sigma)\wedge(\sigma -dd^s\varphi\rho) =
e^F\rho\wedge\sigma,
$$
where function $F:M\to\mathbb{R}$ is normalized so
that $\int_M{e^F}\omega^n = \int_M{\omega^n}$.

Then equation \eqref{eq:main} has unique (modulo adding a constant)
solution $\varphi:M\to\mathbb{R}$ such that
\begin{enumerate}
\item $\tilde{\rho} = \rho+dd^s\varphi\sigma$  is positive; \item $\tilde{\sigma} = \sigma -
dd^s\varphi\rho$ is negative;
\item $\tilde{\rho}$ is dual to $\tilde{\sigma}$.
\end{enumerate}
\end{conjecture}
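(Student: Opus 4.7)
The strategy I would use is Yau's continuity method, transposed from the Kähler to the symplectic setting. The first step is to recast the tensor equation $(\ast)$ as a fully nonlinear scalar PDE. Expanding the left-hand side of the equivalent real form gives
$$\rho\wedge\sigma + (dd^s\varphi\,\sigma)\wedge\sigma - \rho\wedge dd^s\varphi\,\rho - (dd^s\varphi\,\sigma)\wedge(dd^s\varphi\,\rho) = e^F\rho\wedge\sigma,$$
and dividing by the volume form $\rho\wedge\sigma$ (up to a constant factor, this is $\omega^3$) yields an equation of Monge--Ampère type $\mathcal{M}(\varphi)=e^F$. A quick computation of the linearization at $\varphi=0$ should show that its principal symbol is, up to a positive factor depending on $\omega$ and $\Omega$, the standard Laplacian of a Hermitian metric built from the Calabi--Yau data, so the equation is elliptic near the trivial solution.

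Next, I would implement the continuity method on the family $F_t=tF-c_t$, $t\in[0,1]$, with $c_t$ chosen to preserve the normalization $\int_M e^{F_t}\omega^n=\int_M\omega^n$. Let $S\subset[0,1]$ be the set of parameters for which a solution $\varphi_t$ satisfying conditions (1)--(3) exists. Openness of $S$ follows from the inverse function theorem applied on $\{\varphi\in C^{2,\alpha}(M):\int_M\varphi\,\omega^n=0\}$: the linearized operator is a second-order elliptic operator whose kernel is reduced to constants (hence trivial on the normalized space), and $c_1(M)=0$ gives the cohomological room needed for this Fredholm argument. Moreover, conditions (1)--(3) are open, so they persist under small deformation.

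Closedness is the heart of the matter and requires the usual chain of \emph{a priori} estimates independent of $t$: a $C^0$ estimate obtained by Moser iteration on the scalar equation à la Yau, a $C^2$ estimate, and then $C^{2,\alpha}$ from Evans--Krylov (using the concave form of the equation after passing to logarithms), bootstrapped by Schauder to $C^\infty$. \textbf{The main obstacle will be the $C^2$ estimate.} In Yau's classical argument one applies the maximum principle to $\log\operatorname{tr}_\omega\tilde\omega-A\varphi$ and exploits the fact that the deformed form $\tilde\omega$ is again Kähler, so that Chern--Ricci tensors interact cleanly with commutators. Here we deform $\Omega$ rather than $\omega$, and $d^s$ is the symplectic, not the complex, twisted differential, so its commutators with the background connection do not enjoy the same positivity. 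I would look for a suitable test quantity involving $\operatorname{tr}_\omega\tilde\rho$ (or an eigenvalue of the triple $(\omega,\tilde\rho,\tilde\sigma)$) whose maximum-principle analysis absorbs the bad commutator terms using bounded geometry of $\omega$ and the $C^0$ bound already obtained; this is also where conditions (1)--(3) have to be propagated, since their failure would correspond exactly to $C^2$-blowup.

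Finally, uniqueness modulo constants should follow by subtracting the equations for two solutions $\varphi_1,\varphi_2$, rewriting the difference as $L(\varphi_1-\varphi_2)=0$ where $L$ is a linear second-order elliptic operator built from the interpolated forms, and applying the strong maximum principle (or, equivalently, integrating $(\varphi_1-\varphi_2)$ against the difference of the two Monge--Ampère expressions and using the concavity of $\log\det$ on positive forms). The positivity/negativity/duality conditions (1)--(3) guarantee that the interpolation stays inside the admissible cone where $L$ is elliptic, closing the argument.
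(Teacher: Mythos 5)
There is an important mismatch of expectations here: the statement you are proving is Conjecture~\ref{conj:1} of the paper, and the paper offers \emph{no proof of it} --- it is stated as an open conjecture, motivated by analogy with Calabi--Yau (the only proved result in the paper, Theorem~\ref{th:old}, concerns the non-scalar equation of \cite{egorov3}, and even the reduction of that theorem to the scalar equation $(\ast)$ is itself only conjectured, as Conjecture~\ref{conj:3}). So there is no argument in the paper to compare yours against; the only question is whether your proposal stands on its own as a proof. It does not: it is a program, not a proof. You transpose Yau's continuity method and then explicitly leave open the $C^2$ estimate, which you yourself identify as ``the heart of the matter.'' A proof whose central a priori estimate is replaced by ``I would look for a suitable test quantity'' has a genuine gap, and in this problem that gap is the entire content of the conjecture.

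Beyond the missing estimate, several structural claims are asserted rather than verified, and each is delicate for this particular operator. The paper's own Proposition shows that the local scalar form of $(\ast)$ is not a determinant but a \emph{level-$p$ Monge--Amp\`ere operator} in the sense of Harvey--Lawson \cite{HL2}: a sum of products of partial traces of the Hessian minus squares of mixed terms, purely quadratic in $D^2\varphi$. This has consequences you do not address. (i) Your claim that the linearization at $\varphi=0$ is ``the standard Laplacian'' needs an actual computation; since the displayed local operator is homogeneous quadratic in the Hessian, the nondegeneracy of the linearization at the trivial solution is exactly the kind of point that can fail. (ii) The Evans--Krylov step requires concavity (or convexity) of the operator on the set of admissible Hessians; for Harvey--Lawson-type operators this holds only on an appropriate G\aa rding cone, and you have not shown that conditions (1)--(3) --- positivity, negativity and Hitchin duality of stable $3$-forms --- define, or are contained in, such a cone, nor that they are preserved along the continuity path. (iii) Openness requires identifying the kernel and cokernel of the linearized operator at a general $\varphi_t$, not just at $t=0$; invoking ``cohomological room from $c_1(M)=0$'' is not an argument, and the relevant analytic input here would be the $dd^s$-lemma of \cite{Tseng}, which you do not use. (iv) Uniqueness via concavity of the operator again presupposes the unproved convexity structure from (ii). In short, your outline is a reasonable way one might \emph{attack} the conjecture, but every step at which it could fail is left unexamined, and the conjecture remains exactly as open after your argument as before it.
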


We state  notions of positivity and duality for $3$-forms and
definition for $d^s$ operator later in the paper. Now let us restate
Conjecture \ref{conj:1} in a form free of PDE.

\begin{conjecture}\label{conj:2}
Let $M$ be a compact K\"{a}hler manifold of complex dimension $n=3$
with $c_1(M)=0$; let $\omega$ and $\Omega$ be a K\"{a}hler and
holomorphic volume forms respectively. Then there exists a unique
$\tilde{\Omega}\sim\Omega$ such that the K\"{a}hler metric $g_{ij} =
\omega_{ik}\tilde{J}^k_j$ is Ricci-flat. Here $\tilde{J}$ is the
complex structure determined by $\tilde{\Omega}$.
\end{conjecture}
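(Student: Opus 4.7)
The plan is to obtain Conjecture \ref{conj:2} as a PDE-free reformulation of Conjecture \ref{conj:1}. Given a solution $\varphi$ of equation \eqref{eq:main}, set
$$
\tilde{\Omega}=\tilde{\rho}+\sqrt{-1}\tilde{\sigma}=(\rho+dd^s\varphi\,\sigma)+\sqrt{-1}(\sigma-dd^s\varphi\,\rho),
$$
and take this as the definition of the equivalence $\tilde{\Omega}\sim\Omega$. By Hitchin's theory of stable $3$-forms, the positivity in condition (1) produces from $\tilde{\rho}$ alone an almost complex structure $\tilde{J}$, while the duality in condition (3) is precisely what makes $\tilde{\Omega}$ of type $(3,0)$ with respect to $\tilde{J}$. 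Closedness of $\tilde{\Omega}$, inherited from that of $\rho,\sigma$ because $dd^s\varphi\,\sigma$ and $dd^s\varphi\,\rho$ are exact, then forces $\tilde{J}$ to be integrable via Newlander--Nirenberg, so that $\tilde{\Omega}$ is a genuine holomorphic volume form for $\tilde{J}$.

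Next I would verify that $\omega$ remains K\"ahler for $\tilde{J}$. This amounts to two conditions: first, that $\omega$ is of type $(1,1)$ with respect to $\tilde{J}$, equivalently $\omega\wedge\tilde{\Omega}=0$; second, that the symmetric tensor $g_{ij}=\omega_{ik}\tilde{J}^k_j$ is positive-definite. The type condition is the most delicate step and should follow from the specific construction of the operator $d^s$, which is designed so that $dd^s\varphi$ acts on $\Omega$ within the $(3,0)\oplus(0,3)$ component and the deformation preserves orthogonality against $\omega$; positive-definiteness of $g$ is then a pointwise consequence of the positivity--negativity pattern in (1)--(2) combined with the volume identification in step three.

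Ricci-flatness is obtained from the equation itself. For any K\"ahler structure on $M$ with $c_1(M)=0$, the Ricci form equals $-\sqrt{-1}\partial\bar{\partial}\log(\tilde{\Omega}\wedge\overline{\tilde{\Omega}}/\omega^3)$, so the metric is Ricci-flat iff the pointwise norm of the holomorphic volume form is constant. Choosing $F$ so that $e^F\Omega\wedge\bar{\Omega}$ is a constant multiple of $\omega^3$ (which is exactly what the normalization $\int e^F\omega^n=\int\omega^n$ achieves, since $c_1(M)=0$ makes $\Omega\wedge\bar{\Omega}$ and $\omega^3$ proportional in cohomology), equation \eqref{eq:main} yields $\tilde{\Omega}\wedge\overline{\tilde{\Omega}}=\mathrm{const}\cdot\omega^3$, hence Ricci-flatness of $g$. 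Uniqueness of $\tilde{\Omega}$ modulo $\sim$ is immediate from uniqueness of $\varphi$ modulo constants.

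The main obstacle is Conjecture \ref{conj:1} itself. Producing a smooth solution of the nonlinear scalar equation \eqref{eq:main} will require $C^0$, $C^2$ and higher a priori estimates in the spirit of Yau's proof of the Calabi conjecture, but adapted to a setting in which $\varphi$ controls a deformation of the complex structure rather than of the K\"ahler form; plurisubharmonicity arguments tied to a fixed $J$ must be replaced by positivity of the evolving $3$-form $\tilde{\rho}$ via Hitchin's functional. The geometric subtlety in the translation step above, namely maintaining $\omega\wedge\tilde{\Omega}=0$ under the nonlinear deformation, is in turn what reduces the non-scalar system of \cite{egorov3} to the single scalar equation \eqref{eq:main} and should be the first thing to nail down rigorously before the analytic machinery is invoked.
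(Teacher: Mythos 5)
The statement you are asked to prove is labelled a \emph{conjecture}, and the paper offers no proof of it: the only thing the paper asserts is that Conjecture~\ref{conj:2} is \emph{equivalent} to Conjecture~\ref{conj:1}, the equivalence following from Hitchin's dictionary between stable $3$-forms and complex structures. Your proposal reproduces exactly that reduction --- define $\tilde{\Omega}=\tilde{\rho}+\sqrt{-1}\tilde{\sigma}$ from a solution $\varphi$ of \eqref{eq:main}, let $\tilde{J}$ be the complex structure of the stable form $\tilde{\rho}$, use duality to make $\tilde{\Omega}$ of type $(3,0)$, use closedness and Newlander--Nirenberg for integrability, and read Ricci-flatness off the volume identity $\tilde{\Omega}\wedge\overline{\tilde{\Omega}}=e^F\Omega\wedge\bar{\Omega}$. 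That part is consistent with the paper's intent. One simplification you missed: the step you call ``the most delicate,'' namely that $\omega$ stays of type $(1,1)$ for $\tilde{J}$ (equivalently $\omega\wedge\tilde{\Omega}=0$), does not need to be extracted from properties of $d^s$; it is already built into the hypotheses of Conjecture~\ref{conj:1}, since the paper's definition of a positive (resp.\ negative) $3$-form includes primitivity, i.e.\ $\omega\wedge\tilde{\rho}=0$ and $\omega\wedge\tilde{\sigma}=0$ are demanded of the solution rather than derived.

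The genuine gap is the one you yourself flag in your last paragraph: everything rests on the existence and uniqueness of a solution to the scalar equation \eqref{eq:main}, and neither you nor the paper proves this. No a priori estimates, no continuity method, no ellipticity or convexity analysis of the level-$p$ Monge--Amp\`{e}re operator appearing in the local form of \eqref{eq:main} is supplied anywhere; the closest the paper comes is Theorem~\ref{th:old} from \cite{egorov3}, which concerns a \emph{non-scalar} deformation by a pair of $3$-forms $(\alpha,\beta)$, and the passage from that system to the single potential $\varphi$ is itself only Conjecture~\ref{conj:3} (the ``mirror K\"{a}hler potential''). So your proposal is an honest and essentially correct account of the equivalence Conjecture~\ref{conj:1} $\Leftrightarrow$ Conjecture~\ref{conj:2}, but it is not, and cannot currently be completed into, a proof of the statement: the statement remains open, and the analytic core you defer to is precisely the open part.
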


The equivalence of Conjectures \ref{conj:1} and \ref{conj:2} follows
from the following definitions of stable forms. The notion of a
stable $3$-form was introduced by Hitchin in \cite{Hitchin3f,
Hitchin_stable}.

\begin{definition}A real $3$-form $\rho$ on real  $6$-space $V$ is called
{\it stable} if the orbit of $\rho$ under the natural action of
$GL(V)$ is open in $\Lambda^3V^*$.
\end{definition}

Any stable form canonically determines a complex structure on $V$.

\begin{definition}A real $3$-form $\rho$ on real symplectic $6$-space is called
{\it positive} if
\begin{enumerate}
\item $\rho$ is stable;
\item $\rho$ is primitive;
\item $\omega(X, J_\rho X)\geq 0$ for any $X\in V$, where $\omega$ is a
symplectic form and $J_\rho$ is the complex structure determined by
$\rho$.
\end{enumerate}

\end{definition}

Any positive (negative) closed $3$-form is a real (imaginary) part
of the holomorphic volume form.

\begin{definition}A stable $3$-form $\rho$ is called {\it dual} to a stable
$3$-form $\sigma$ if
$$
\sigma = J_\rho\rho.
$$
\end{definition}


Since we do not deform the K\"{a}hler structure,   a symplectic
differential operator is used.

\begin{definition}
\begin{equation*}
d^s\alpha := (-1)^{k+1}\ast_s d\,\ast_s\alpha,
\end{equation*}
where $\alpha$ is a $k$-form and $\ast_s$ is the symplectic Hodge
star. The definition of $\ast_s$ is obtained by substituting the
Riemannian metric for symplectic form in the definition of the usual
Hodge star.

\end{definition}
\begin{remark}
Note that $d^s$ decreases degree of the form by one and anticommutes
with $d$. There is also the $dd^s$-lemma analogous to the
$dd^c$-lemma. Both lemmata hold on a K\"{a}hler manifold. See for
example \cite{Tseng} and references therein.
\end{remark}

\begin{proposition}
The equation \eqref{eq:main} differs from the complex
Monge--Amp\`{e}re equation.
\end{proposition}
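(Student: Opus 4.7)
The plan is to distinguish equation \eqref{eq:main} from the complex Monge--Amp\`ere equation by comparing the polynomial degrees of the two sides in the second derivatives of the unknown $\varphi$; this is a coordinate-free invariant that any two PDEs obtained from one another by reformulation must share.

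First I would recall that, on an $n$-dimensional K\"ahler manifold, the complex Monge--Amp\`ere equation reads $(\omega + \sqrt{-1}\partial\bar\partial\varphi)^n = e^F\omega^n$. Binomial expansion of the left hand side,
\begin{equation*}
(\omega + \sqrt{-1}\partial\bar\partial\varphi)^n = \sum_{k=0}^{n}\binom{n}{k}\omega^{n-k}\wedge(\sqrt{-1}\partial\bar\partial\varphi)^k,
\end{equation*}
gives a polynomial of total degree exactly $n$ in the entries of the complex Hessian of $\varphi$. In the Calabi--Yau threefold case $n=3$ the equation is therefore genuinely cubic in the Hessian, the top term being $(\sqrt{-1}\partial\bar\partial\varphi)^3$.

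Next I would expand the left hand side of \eqref{eq:main}, reading $dd^s\varphi\Omega$ as $dd^s(\varphi\Omega)$ (the only interpretation consistent with the degree convention for $d^s$, since $d^s$ applied to a function would be a $(-1)$-form):
\begin{equation*}
(\Omega - \sqrt{-1}dd^s(\varphi\Omega))\wedge(\bar\Omega + \sqrt{-1}dd^s(\varphi\bar\Omega)) = \Omega\wedge\bar\Omega + L(\varphi) + dd^s(\varphi\Omega)\wedge dd^s(\varphi\bar\Omega),
\end{equation*}
where $L(\varphi)$ collects the two cross terms linear in $\varphi$. Only two factors involving the unknown appear, so the left hand side is a polynomial of degree at most two in the second derivatives of $\varphi$. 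Since this is strictly less than the cubic degree of the Monge--Amp\`ere operator in complex dimension three, the two equations cannot coincide.

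The one point that still needs to be verified is that no algebraic identity specific to the holomorphic volume form on a Calabi--Yau background secretly either reduces the Monge--Amp\`ere cubic term to something quadratic, or inflates \eqref{eq:main} to cubic order. I would close this potential gap by a single pointwise check at a reference point in holomorphic Darboux coordinates: take $\omega$ in standard form, $\Omega = dz^1\wedge dz^2\wedge dz^3$, substitute a generic real quadratic for $\varphi$, and confirm that $\det(\varphi_{i\bar j})$ appears as a nonzero contribution to the Monge--Amp\`ere expansion but is absent from the expansion of \eqref{eq:main}. This local computation is the only small obstacle in the argument, and once done it completes the proof.
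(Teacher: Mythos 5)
Your argument is correct, and it reaches the conclusion by a different route than the paper. You count the degree of the operator as a polynomial in the second derivatives of $\varphi$: each factor $\Omega\mp\sqrt{-1}dd^s(\varphi\Omega)$ is affine in the Hessian because $dd^s$ is a linear second-order operator, so the left side of \eqref{eq:main} is at most quadratic, while $(\omega+\sqrt{-1}\partial\bar\partial\varphi)^3$ contains the genuinely cubic term $\det(\varphi_{i\bar j})$ (nonzero already for $\varphi=\sum|z^i|^2$); since neither multiplying the equation by a nonvanishing zeroth-order factor nor the $\varphi$-independent right-hand sides can change this degree, the operators cannot coincide. Your reading of $dd^s\varphi\Omega$ as $dd^s(\varphi\Omega)$ is the right one (it is forced by the degree convention for $d^s$ and matches the real--imaginary splitting $(\rho+dd^s\varphi\sigma)\wedge(\sigma-dd^s\varphi\rho)$ given in the paper). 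The paper instead writes out the operator explicitly in real Darboux coordinates with $\omega=dx^{12}+dx^{34}+dx^{56}$, obtaining a concrete quadratic form in the $\varphi_{ij}$, and declares the difference evident; that computation buys the extra information that the operator is a level-$p$ Monge--Amp\`ere operator in the sense of Harvey--Lawson, which your degree count does not reveal. Conversely, your argument is shorter, avoids the coordinate expansion entirely, and isolates the one fact that actually forces the conclusion (quadratic versus cubic dependence on the Hessian), together with the correct closing check that the cubic term of the Monge--Amp\`ere operator does not vanish identically.
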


\begin{proof}

Suppose $U$ is an open set of $M$ with coordinate chart $\{x^i\}$
such that $\omega = dx^{12}+dx^{34}+dx^{56}$ on $U$; then the global
equation \eqref{eq:main} is locally equivalent to the following
equation on $U$
\begin{equation*}
\begin{array}{l}
\quad(\varphi_{22}+\varphi_{33}+\varphi_{55})(\varphi_{11}+\varphi_{44}+\varphi_{66})
+\ (\varphi_{11}+\varphi_{44}+\varphi_{55})(\varphi_{22}+\varphi_{33}+\varphi_{66})\\
+\
(\varphi_{11}+\varphi_{33}+\varphi_{66})(\varphi_{22}+\varphi_{44}+\varphi_{55})
+\
(\varphi_{22}+\varphi_{44}+\varphi_{66})(\varphi_{11}+\varphi_{33}+\varphi_{55})\\
-\ (\varphi_{12}+\varphi_{34}+\varphi_{56})^2 -\
(-\varphi_{12}-\varphi_{34}+\varphi_{56})^2
-(\varphi_{12}-\varphi_{34}-\varphi_{56})^2\\
-\ (-\varphi_{12}+\varphi_{34}-\varphi_{56})^2
-2\left[(\varphi_{13}-\varphi_{24})^2
+(\varphi_{36}+\varphi_{45})^2+(\varphi_{15}-\varphi_{26})^2\right.\\
+\ \left.(\varphi_{16}+\varphi_{25})^2+(\varphi_{35}-\varphi_{46})^2
+(\varphi_{14}+\varphi_{23})^2\right] = 1,
\end{array}\end{equation*}
where  $\varphi_{ij} =
\partial^2\varphi/\partial x^i\partial x^j$.

Obviously, this equation  differs from the complex Monge--Amp\`{e}re
equation. In fact, the operator of the equation coincides with   a
level-$p$ Monge--Amp\`{e}re operator considered in \cite{HL2}.
\end{proof}

\begin{remark}
The uniqueness of the $n=3$ case is that a single stable real
$3$-form determines complex structure
\cite{Hitchin3f,Hitchin_stable}. In higher dimensions one needs a
pair of real $n$-forms to determine a complex structure and
therefore a pair of functions.
\end{remark}

\begin{remark}In  \cite{Hull} the generalized Monge--Amp\`{e}re
equation is defined in the sense of generalized complex structures.
However, for the case of the K\"{a}hler geometry it reduces to the
usual complex Monge--Amp\`{e}re equation.
\end{remark}

\section{Mirror K\"{a}hler potential}

In paper \cite{egorov3} we proposed a new equation for the
$3$-dimensional Calabi--Yau metrics and proved the  solution
existence theorem.

\begin{theorem}\label{th:old}
Let $(M,\omega)$ be a compact K\"{a}hler $3$-manifold such that
$c_1(M)=0$; let $\Omega = \rho+i\sigma$ be a holomorphic volume form
on $M$. Then  the following equation
\begin{equation*}
(\rho+dd^s\alpha)\wedge(\sigma +dd^s\beta) = e^F\rho\wedge\sigma
\end{equation*}
or equivalently
\begin{equation*}(\Omega + dd^s\psi)\wedge (\bar{\Omega} + dd^s\bar{\psi}) =
e^F\Omega\wedge\bar{\Omega},\quad \psi = \alpha+\sqrt{-1}\beta
\end{equation*}
has solution $\alpha, \beta\in \Omega^3(M)$ such that
\begin{enumerate}
\item $\rho+dd^s\alpha$ and $\sigma +dd^s\beta$ are primitive stable $3$-forms;
\item $\rho+dd^s\alpha$ is dual to $\sigma +dd^s\beta$  in
the sense of the stable forms;
\item real function $F$ is normalized: $\int_M{e^F\rho\wedge\sigma} = \int_M
\rho\wedge\sigma$.
\end{enumerate}
\end{theorem}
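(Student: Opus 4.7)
The plan is to prove Theorem~\ref{th:old} by the continuity method, following the strategy that Yau used for the classical Calabi conjecture, but adapted to the symplectic/stable form setting. Embed the equation in the one-parameter family
\begin{equation*}
(\rho+dd^s\alpha_t)\wedge(\sigma+dd^s\beta_t) = e^{tF}\rho\wedge\sigma, \qquad t\in[0,1],
\end{equation*}
under the side constraints that $\rho+dd^s\alpha_t$ and $\sigma+dd^s\beta_t$ be primitive stable $3$-forms, dual to one another. Let $T\subset[0,1]$ be the set of $t$ for which a solution exists. Step one is to observe $0\in T$, since $\alpha_0=\beta_0=0$ works. The goal is then to prove that $T$ is both open and closed in $[0,1]$, hence $T=[0,1]$ and in particular $1\in T$.

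For openness, I would linearize the system at a solution $(\alpha_t,\beta_t)$. The variation of the left-hand side produces an operator of the form $(a,b)\mapsto (\sigma+dd^s\beta_t)\wedge dd^s a + (\rho+dd^s\alpha_t)\wedge dd^s b$, which I expect to be an elliptic second-order operator once the primitivity and duality constraints are added. The positivity of $\rho+dd^s\alpha_t$ and the negativity of $\sigma+dd^s\beta_t$ should guarantee uniform ellipticity, analogous to the r\^ole played by the positivity of $\omega+i\partial\bar\partial\varphi$ in Yau's proof. Using the $dd^s$-lemma on a K\"ahler manifold (see the remark after the definition of $d^s$) one obtains a Hodge-type decomposition allowing one to invert the linearization modulo the trivial kernel; an application of the implicit function theorem in appropriate H\"older spaces then yields openness.

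Closedness is the analytic core of the argument and reduces, as in Yau's proof, to establishing uniform a~priori estimates on any solution $(\alpha_t,\beta_t)$ to the normalized equation, independently of $t$. I would proceed through the standard hierarchy:
\begin{enumerate}
\item a $C^0$-estimate, obtained by integration against suitable test forms together with a Moser-type iteration;
\item a $C^2$-estimate, reflecting the "level-$p$ Monge--Amp\`ere" structure noted after the previous proposition, exploiting the concavity of the operator on the open cone cut out by the positivity conditions (1) and (2);
\item higher-order estimates via Evans--Krylov and Schauder theory, once ellipticity and $C^2$-bounds are in hand.
\end{enumerate}
The $C^2$-estimate will be the principal obstacle: unlike the complex Monge--Amp\`ere equation, the operator here is a sum of products of primitive trace-like expressions rather than a determinant, so one must carefully identify the cone of admissible forms and verify concavity on it before the standard machinery applies.

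Finally, the limit $(\alpha,\beta)$ obtained from a convergent subsequence inherits the constraints (1)--(2), since positivity, primitivity and duality are closed conditions under $C^2$-convergence, while the normalization in (3) passes to the limit by dominated convergence. This closes $T$ and completes the proof.
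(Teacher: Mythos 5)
The paper itself contains no proof of Theorem~\ref{th:old}: the result is imported from the author's four-page note \cite{egorov3}, and everything in the present paper indicates that the existence statement there is obtained \emph{softly}, by riding on Yau's solution of the complex Monge--Amp\`ere equation and then using the $dd^s$-lemma on a compact K\"ahler manifold to express the resulting exact primitive corrections to $\rho$ and $\sigma$ as $dd^s\alpha$ and $dd^s\beta$. Indeed, the whole point of the present paper is that the equation of \cite{egorov3} is ``non-scalar'' (the unknowns are $3$-forms), so that its solvability is cheap, while the genuinely hard, determined, scalar version --- Conjecture~\ref{conj:1}, with a single unknown function $\varphi$ --- is left as an open conjecture. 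Your proposal instead tries to rerun Yau's continuity method directly on the non-scalar equation, which is a different route, and as written it does not close.

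The central gap is that the equation is a single top-degree (hence scalar) condition imposed on a \emph{pair of unknown $3$-forms} $(\alpha,\beta)$, so the system is massively underdetermined. The linearization $(a,b)\mapsto(\sigma+dd^s\beta_t)\wedge dd^s a+(\rho+dd^s\alpha_t)\wedge dd^s b$ sends pairs of $3$-forms to $6$-forms; its kernel is infinite-dimensional (already any $a,b$ with $dd^s a=dd^s b=0$, and much more), it is not elliptic in any standard sense, and no slice or gauge is specified on which the implicit function theorem could be applied --- ``modulo the trivial kernel'' conceals the entire difficulty. The side conditions are not innocuous either: duality means $\tilde\sigma=J_{\tilde\rho}\tilde\rho$, a nonlinear pointwise identity that essentially determines one unknown from the other, so the continuity family must be posed on a constrained set, and you never write down the determined system to which ellipticity, concavity on a G\r{a}rding-type cone, Evans--Krylov and Schauder theory would actually apply. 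The a~priori estimates --- which you yourself flag as ``the principal obstacle'' --- are not carried out, and Moser iteration has no evident analog when the unknown is a $3$-form rather than a potential. (A minor further point: $e^{tF}$ violates the normalization in item (3) for $0<t<1$, so the path requires $t$-dependent normalizing constants.) In short, the proposal is a program modeled on Yau's proof rather than a proof, and it bypasses the reduction to Yau's theorem that makes the quoted result accessible.
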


However, the main equation of \cite{egorov3} is non-scalar. We
relate Theorem \ref{th:old} with Conjecture \ref{conj:1} by
conjecturing that there exists a {\it mirror K\"{a}hler potential}.

\begin{conjecture}\label{conj:3}Let $M$ be a
compact Calabi--Yau $3$-fold with K\"{a}hler form $\omega$ and
holomorphic volume form $\Omega=\rho+\sqrt{-1}\sigma$. Then for any
$\tilde{\Omega}\sim\Omega$ there exists a function
$\varphi:M\to\mathbb{R}$ such that
$$
\tilde{\Omega} = \Omega + \sqrt{-1}dd^s\varphi\Omega.
$$
\end{conjecture}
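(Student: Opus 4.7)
The plan is to prove the conjecture in two stages: first apply a $dd^s$-lemma argument to write $\tilde\Omega-\Omega$ as $dd^s$ of a $3$-form, then refine this primitive to the restricted form $i\varphi\Omega$. For the first stage, interpret $\tilde\Omega\sim\Omega$ as cohomologous in $H^{3}(M;\mathbb{C})$, so that $\eta := \tilde\Omega-\Omega$ is $d$-exact. Both $\Omega$ and $\tilde\Omega$ are primitive $3$-forms with respect to $\omega$ (since $\omega$ is of type $(1,1)$ for both $J$ and $\tilde J$, and no $(4,1)$-forms exist on a complex $3$-fold), so the standard formula for $\ast_s$ on a primitive top-degree form gives $\ast_s\Omega=\Omega$ and $\ast_s\tilde\Omega=\tilde\Omega$, whence $d^s\Omega=d^s\tilde\Omega=0$. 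Being $d$-exact and $d^s$-closed, $\eta$ equals $dd^s\gamma$ for some $3$-form $\gamma$ by the $dd^s$-lemma on the compact K\"ahler manifold $M$, unique modulo $\ker(dd^s)$.

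The key observation for the refinement is that the image of $L(\varphi):=i\,dd^s(\varphi\Omega)$ is confined to $\Omega^{3,0}\oplus\Omega^{2,1}$: $\ast_s(\varphi\Omega)=\varphi\Omega$, and the Lefschetz decomposition $d(\varphi\Omega) = \bar\partial\varphi\wedge\Omega = \omega\wedge\alpha_1$ for a primitive $(2,0)$-form $\alpha_1 = \Lambda(\bar\partial\varphi\wedge\Omega)$ gives $d^s(\varphi\Omega)=-\alpha_1$ and hence $L(\varphi) = -i\,\partial\alpha_1-i\,\bar\partial\alpha_1$. This forces the relation $\sim$ to include the implicit constraint that $\eta$ have no $(1,2)$- or $(0,3)$-components with respect to $J$, and reduces the conjecture to surjectivity of the scalar second-order operator $L$ onto $d$-exact $(3,0)\oplus(2,1)$-forms.

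The main obstacle is this surjectivity. Using the K\"ahler identities one rewrites $L(\varphi)$ in terms of $\partial^{*}(\bar\partial\varphi\wedge\Omega)$ and $\bar\partial^{*}(\bar\partial\varphi\wedge\Omega)$, so $L$ is of elliptic type; one must then verify that its symbol hits the full target bidegree. A natural route is via Hodge theory on $M$ combined with the Tian--Todorov identification $H^{2,1}(M)\cong H^{1}(M,T^{1,0}M)$. Alternatively, one can run a continuity method along a path $\Omega_t$ from $\Omega$ to $\tilde\Omega$, with openness from the implicit function theorem applied to the linearization of $L$ and closedness from uniform a priori estimates, in the spirit of Yau's proof of Calabi's conjecture.
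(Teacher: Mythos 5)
This statement is an open conjecture in the paper: the author gives no proof (only the flat example on $\mathbb{C}^3$ as evidence), so there is nothing to compare your argument against, and your proposal must stand on its own. It does not: it contains a genuine gap at exactly the point where the real content of the conjecture lies. Your first stage is fine as far as it goes --- $\Omega$ is primitive, Weil's identity gives $\ast_s\Omega=\Omega$, hence $d^s\Omega=0$, and the $dd^\Lambda$-lemma of Tseng--Yau produces \emph{some} $3$-form $\gamma$ with $\tilde\Omega-\Omega=dd^s\gamma$. (Even here you should be careful: primitivity of $\tilde\Omega$ requires $\omega$ to be of type $(1,1)$ for $\tilde J$, which is a compatibility hypothesis, not a consequence of $\tilde\Omega\sim\Omega$.) But the conjecture asks for $\gamma$ of the very special form $\sqrt{-1}\,\varphi\Omega$ with $\varphi$ a single real function, and your ``refinement'' step only computes what $dd^s(\varphi\Omega)$ looks like; it never shows that the given $\eta$ is in the image of $\varphi\mapsto \sqrt{-1}\,dd^s(\varphi\Omega)$. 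The final paragraph concedes this (``the main obstacle is this surjectivity'') and then lists candidate techniques --- K\"ahler identities, Tian--Todorov, a continuity method --- without carrying out any of them. Reducing the conjecture to the surjectivity of $L$ is essentially restating the conjecture, not proving it.

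There is a second, more structural problem that you notice and then improperly dispose of. You correctly compute that $\sqrt{-1}\,dd^s(\varphi\Omega)=-\sqrt{-1}\,d\alpha_1$ with $\alpha_1$ a $(2,0)$-form, so the image of $L$ lies in $\Omega^{3,0}\oplus\Omega^{2,1}$ with respect to the \emph{original} complex structure $J$. A finite deformation of complex structure, written via a Beltrami differential $\mu$ as $\tilde\Omega=e^{\mu\lrcorner}\Omega$, generically has nonzero $(1,2)$ and $(0,3)$ components relative to $J$. Declaring that the (undefined) relation $\sim$ ``implicitly'' excludes such components is redefining the hypothesis to fit the desired conclusion; this is either an obstruction to the conjecture as literally stated or a point that must be addressed by an explicit normalization of $\tilde\Omega$, and your proposal does neither. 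In short: the computation of $dd^s(\varphi\Omega)$ is a useful observation, but the proposal leaves the conjecture exactly as open as the paper does.
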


\begin{example}Suppose $\mathbb{C}^3$ with a flat Hermitian
metric $\sum dz^i\otimes d\bar{z}^i$; the K\"{a}hler form $\omega =
(\sqrt{-1}/2)\sum dz^i\wedge d\bar{z}^i$ and holomorphic volume form
$\Omega = dz^1\wedge dz^2\wedge dz^3$. Then the following identities
hold:
$$
\omega = dd^c\varphi, \quad \Omega = -
(\sqrt{-1}/3)dd^s\varphi\:\Omega,
$$
where $\varphi = \sum|z^i|^2$.
\end{example}

%

It would be interesting to establish a relationship between the
classical and mirror K\"{a}hler potentials.

The author would like to thank  D.~Alexeevsky, G.~Cavalcanti and
M.~Verbitsky for useful remarks during the conference "Geometric
structures on complex manifolds -- 2011" in Moscow. The author is
grateful to I.\,A.~Taimanov for support and useful remarks.

\bibliographystyle{amsplain}

\end{document}